\documentclass[12pt]{amsart}

\setcounter{secnumdepth}{1}
\usepackage[matrix,arrow,curve,frame]{xy}
\usepackage{amsmath,amsthm,amssymb,enumerate}
\usepackage{latexsym}
\usepackage{amscd}
\usepackage[colorlinks=false]{hyperref}
\usepackage{euscript}

\setlength{\oddsidemargin}{0in} \setlength{\evensidemargin}{0in}
\setlength{\marginparwidth}{0in} \setlength{\marginparsep}{0in}
\setlength{\marginparpush}{0in} \setlength{\topmargin}{0in}
\setlength{\headheight}{0pt} \setlength{\headsep}{0pt}
\setlength{\footskip}{.3in} \setlength{\textheight}{9.2in}
\setlength{\textwidth}{6.5in} \setlength{\parskip}{4pt}

\newtheorem{thm}[subsection]{Theorem}

\newtheorem{lemma}[subsection]{Lemma}

\theoremstyle{definition}

\numberwithin{equation}{section}

\def\cO{{\mathcal O}}

%\input amssym
% Gothic fonts

\DeclareMathOperator{\trace}{Trace}

\newfont{\german}{eufm10}

\begin{document}
\pagestyle{plain}

\title{ Chiral Hodge cohomology and Mathieu moonshine}

\author{Bailin Song}

%\thanks{The author is supported  by NSFC No. 11101393 }

\address{Key Laboratory of Wu Wen-Tsun Mathematics, Chinese Academy of
Sciences, School of Mathematical Sciences, University of Science and
Technology of China, Hefei, Anhui 230026, P.R. China}
\email{bailinso@ustc.edu.cn}

\thanks{The author is supported  by  NSFC 11771416.  }

\begin{abstract}
   We construct a filtration of chiral Hodge cohomolgy of a K3 surface $X$,  such that its associated graded object
 is a unitary representation of  the $\mathcal N=4$ superconformal vertex algebra  with central charge $c=6$ and its subspace of primitive vectors has the property: its equivariant character
 for  a finite symplectic automorphism $g$ of $X$ agrees with the  McKay-Thompson series for $g$ in Mathieu moonshine.
  \end{abstract}

%\Keywords{vertex algebra; chiral de Rham complex; sigma model; Kummer surface; superconformal algebra; jet scheme; arc space; invariant theory}

\maketitle
%\keywords{vertex algebra; chiral de Rham complex; sigma model; Kummer surface; superconformal algebra; jet scheme; arc space; invariant theory}
%\tableofcontents
\section{Introduction}
     In 2010, Eguchi, Ooguri and Tachikawa \cite{EOT} observed that when the elliptic genus of a K3 surface,
the Jacobi form $2 \phi_{0,1}(z;\tau)$ of weight $0$ and index $1$, is decomposed into a sum of the characters of
the $\mathcal N=4$ superconformal vertex algebra with central charge $c = 6$,
\begin{equation}\label{eq:chardecom} 2\phi_{0,1}(z;\tau)=-2ch_{1,\frac 1 4,\frac 1 2}^{\tilde R}(z;\tau) +20ch_{1,\frac 1 4,0}^{\tilde R}(z;\tau)+ 2\sum_{n=1}^\infty A_n ch_{1,n+\frac 1 4,\frac 1 2}^{\tilde R}(z;\tau),
\end{equation}
the first few coefficients $A_n$  are the sums of the dimensions of the irreducible representations of the largest Mathieu group $M_{24}$. Let
\begin{equation}\label{eq:graddim}\Sigma(q)=q^{-\frac 1 8}(-2+2\sum_{n=1}^\infty A_n q^n).
\end{equation} It is a mock
modular form of weight $\frac 1 2$. They conjectured that there exist   a
graded $M_{24}$-module $K = \sum_{n=0}^\infty K_n q^{n-1/8}$ with graded dimension $\Sigma(q)$.
It is Mathieu analogue to the modular function $J(q)$ in the famous monstrous moonshine\cite{B}\cite{CN}\cite{T}: the expansion coefficients of
$$
J(q) =\frac 1 q + 196884 q + 21493760 q^2 + \cdots  $$
could be naturally decomposed into the sums of the dimensions of the irreducible
representations of the largest sporadic group--the Fischer-Griess monster. Subsequently,  the McKay-Thompson series $\Sigma_g(q)$ for $g$ in $M_{24}$ were proposed
in several works \cite{C}\cite{CD}\cite{EH2}\cite{GHV}\cite{GHV2}
 and Terry Gannon \cite{G} proved that
these McKay-Thompson series indeed determine a graded $M_{24}$-module $K$.
But the proof does not explain any connection to geometry or physics, and a concrete construction of $K$ remains unknown.
In this work, we will construct a graded vector space from the chiral de Rahm algebra of the K3 surface with graded dimension $\Sigma(q)+2q^{-\frac 1 8}$.

  Chiral de Rham algebra $\Omega_X^{ch}$ is a sheaf of vertex algebras on a complex manifold $X$ constructed by Malikov, Schectman and Vaintrob in   ~\cite{MSV}\cite{MS}.
The sheaf has a $\mathbb Z\times \mathbb Z_{\geq 0}$ grading
$$\Omega_X^{ch}=\bigoplus_{k=0}^\infty\bigoplus_{p}\Omega_X^{ch}[k,p]$$
 by conformal weight $k$ and fermionic number $p$. And the weight zero piece of the sheaf
coincides with the ordinary de Rham sheaf. According to \cite{Kap}, If $X$ is a Calabi-Yau manifold, its cohomology $H^*(X,\Omega_X^{ch})$, which is called \textsl{chiral Hodge cohomology} ,
can be identified with the infinite-volume limit of the half-twisted sigma model defined by E. Witten. This construction has substantial applications
to mirror symmetry: Borisov established a relation between these sheaves of vertex algebras for mirror
Calabi-Yau hypersurfaces and complete intersections in toric varieties in \cite{Bor}.

If $X$ is a hyperK\"ahler manifold, we will show that $H^i(X,\Omega_X^{ch})$ has a filtration $\{H^i_k(X)\}$, such that its associated graded object
$\mathcal H^i(X,\Omega^{ch}_{X})=\oplus H^i_k(X)/H^i_{k+1}(X)$ is a unitary representation of  the $\mathcal N=4$ vertex algebra  with central charge $c=3\dim X$( Theorem \ref{thm:unitary}).

If $X$ is a K3 surface, let $\mathcal A_{n,2}^1(X)$ be the space of the primitive vectors with conformal weight $n$ in the unitary representation  $\mathcal H^1(X,\Omega^{ch}_{X})$
 of the $\mathcal N=4$ vertex algebra  with central charge $c=6$ .
Let
$$\mathcal A_X(q)=\sum_{n=1}^\infty \mathcal A_{n,2}^1(X)q^{n-\frac 1 8}.$$
We will show
\begin{thm} \label{thm:main}The  graded dimension of $\mathcal A_X(q)$ is $\Sigma(q)+2q^{\frac 1 8}$. For a finite symplectic automorphism $g$ acting on a K3
surface $X$,  $$\Sigma_g(q)+2q^{-\frac 1 8}=\sum_{n=1}^\infty q^{n-\frac 1 8}\trace_{\mathcal A^1_{n,2}(X)}g=\trace_{\mathcal A_X(q)}g.$$
Here $g$ is regarded as a member of $M_{24}$ by Mukai's classification of finite subgroups of  symplectic automorphisms of K3 surfaces in \cite{K}\cite{M}.
\end{thm}

Soon after this article was posted online, Katrin Wendland told me that she had similar result in \cite{W}, which was almost finished.

\section{Chiral de Rham complex}
The chiral de Rham algebra is a sheaf of vertex algebras $\Omega_X^{ch}$ defined on any complex manifold $X$. Let $(U,\gamma^1,\cdots \gamma^N)$ be a complex coordinate system of an $N$ dimensional complex manifold $X$. Let $\mathcal O(U)$ be the space of holomorphic functions on $U$.
As a vertex algebra $\Omega_X^{ch}(U)$ is generated by $\beta^i(z), b^i(z), c^i(z)$, $1\leq i\leq N$ and $f(z)$, $f\in \mathcal O(U)$ with their nontrivial OPEs
$$\beta^i(z)  f(w)\sim \frac {\frac{\partial f}{\partial \gamma^i}(z)}{z-w},\quad b^i(z) c^j(w)\sim \frac {\delta^i_j}{z-w}$$ and the normally ordered relations $$:f(z)g(z):\ =fg(z), \text{ for }  f, g \in \mathcal O(U).$$
Let $\Omega_\gamma$ be the sub algebra of $\Omega_X^{ch}(U)$ which is generated by  $\beta^i(z), b^i(z), c^i(z)$ and $\gamma^i(z)$, $1\leq i\leq N$. It is the tensor product of $N$ copies of the $\beta\gamma-bc$ system.
$\mathcal O(U)$ is a  $\mathbb C[\gamma^1_{(-1)},\cdots \gamma^N_{(-1)}]$ module by identifying the action $\gamma^i_{(-1)}$ with multiplication of $\gamma^i$ in $\mathcal O(U)$.
$\Omega_X^{ch}(U)$ can be regarded as the localization of $\Omega_{\gamma}$ on
$U$, $$\mathcal O(U)\otimes_{\mathbb C[\gamma^1_{(-1)},\cdots \gamma^N_{(-1)}]} \Omega_{\gamma}=\mathcal O(U)\otimes_{\mathbb C} \Omega_{\gamma}^+\cong \Omega_X^{ch}(U).$$
Here $\Omega_{\gamma}^+$ is the sub algebra of $\Omega_{\gamma}$ generated by $\beta^i(z),\partial \gamma^i(z), b^i(z)$ and $c^i(z)$. The second isomorphism maps $f\otimes a$ to $:f(z)a:$.

So $\Omega_X^{ch}(U)$ is spanned by the elements
\begin{eqnarray}\label{eq.span}
:f \partial^{k_1}\beta^{i_1}\cdots \partial^{k_s}\beta^{i_s}\partial^{l_1}b^{j_1}
\cdots \partial^{l_t}b^{j_t}\partial^{m_1}c^{r_1}
 \cdots\partial^{m_u}c^{r_u}\partial^{n_1}\gamma^{s_1}\cdots \partial^{n_v}\gamma^{s_v}:,\,\,\, f\in \mathcal O(U), \\
\quad k_1\geq k_2\geq \cdots\geq k_s\geq 0,\, l_1\geq\cdots \geq l_t\geq 0,\, m_1\geq\cdots \geq m_u\geq 0,\, n_1\geq\cdots \geq n_v>0.\nonumber
 \end{eqnarray}

 $\Omega_X^{ch}(U)$  is a free $\cO(U)$ module (which is not compatible with the vertex algebra structure), which has a basis given by elements of (\ref{eq.span}) with $f=1$.

$\Omega_X^{ch}[k,p](U)$, the part of conformal weights $k$ and fermionic numbers $p$  of $\Omega_X^{ch}(U)$, is a free $\cO(U)$ module, which has a basis given by elements of (\ref{eq.span})
with $f=1$ and $$k=\sum k_i+\sum l_i+\sum m_i+\sum n_i+s+t, \quad p=u-t.$$

 Let $\tilde \gamma^1,\cdots \tilde \gamma^N$ be another set of coordinates on $U$, with
\begin{equation}\label{eq:trans}\tilde \gamma^i=f^i(\gamma^1,\cdots \gamma^N), \quad \gamma^i=g^i(\tilde \gamma^1,\cdots \tilde \gamma^N).
\end{equation}
The coordinate change equations for the generators of $\Omega_X^{ch}(U)$ are
\begin{align}\label{chi.coo}
\partial \tilde \gamma^i(z)&=:\frac{\partial f^i}{\partial \gamma^j}(z)\partial \gamma^j(z):\,, \nonumber \\
\tilde b^i(z)&=:\frac{\partial g^j}{\partial \tilde \gamma^i}(g(\gamma))b^j: \nonumber\,, \\
\tilde c^i(z)&=:\frac{\partial f^i}{\partial \gamma^j}(z)c^j(z):\,,\\
\tilde \beta^i(z)&=:\frac{\partial g^j}{\partial \tilde \gamma^i}(g(\gamma))(z)\beta^j(z):
+::\frac{\partial}{\partial \gamma^k}(\frac{\partial g^j}{\partial \tilde \gamma^i}(g(\gamma)))(z)c^k(z):b^j(z):\,.\nonumber
\end{align}

 \subsection{Filtration}
 Let $\Omega_{X,l}^{ch}(U)$ be the subspace of $\Omega_X^{ch}(U)$, which is spanned by elements represented by(\ref{eq.span}) with $v-s\geq l$.
 $$\cdots \subset \Omega_{X,l+1}^{ch}(U)\subset \Omega_{X,l}^{ch}(U)\subset \Omega_{X,l-1}^{ch}(U)\subset\cdots
 $$
 is a filtration of $\Omega_{X}^{ch}(U)$.
 \begin{lemma}The filtration is compatible with the grading of conformal weight and fermionic number . And for $l>k$,
 $$\Omega_X^{ch}[k,p](U)\cap \Omega_{X,l}^{ch}(U)= \Omega_X^{ch}[k,p](U)\cap \Omega_{X,k}^{ch}(U);$$
 $$\Omega_X^{ch}[k,p](U)\cap \Omega_{X,-l}^{ch}(U)= \Omega_X^{ch}[k,p](U)\cap \Omega_{X,-k}^{ch}(U).$$
\end{lemma}
\begin{proof}By the definition,
$\Omega_X^{ch}[k,p](U)\cap \Omega_{X,l}^{ch}(U)$ is a free $\cO(U)$ module, which has a basis given by elements of (\ref{eq.span}) with $f=1$ and
$$k=\sum k_i+\sum l_i+\sum m_i+\sum n_i+s+t,\quad p=u-t, \quad v-s\geq l .$$
So $$ \Omega_{X,l}^{ch}(U)=\bigoplus_{k,p}\Omega_X^{ch}[k,p](U)\cap \Omega_{X,l}^{ch}(U).$$
We have the first statement.
For a fixed $k$, there is no elements represented by $(\ref{eq.span})$ with $v-s>k$ or $v-s<-k$ since $k\geq s$ and $k\geq v$, so we have the second statement.
\end{proof}
 \begin{lemma}\label{lem:prod} For  any $A\in\Omega_{X,k}^{ch}(U)$, $A'\in\Omega_{X,l}^{ch}(U)$,
 $A_{(n)}A'\in \Omega_{X,k+l}^{ch}(U).$ For any holomorphic function $f$ on $U$,
 $f_{(n)}A\in \Omega_{X,k+1}^{ch}(U)$, for  $n\neq -1$.
 \end{lemma}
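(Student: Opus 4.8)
\medskip\noindent\textit{Proof proposal.} The plan is to show that the integer $\wt:=v-s$ attached to a monomial of the form (\ref{eq.span}) never decreases, either under $\partial$ or under any product $(\,\cdot\,)_{(n)}(\,\cdot\,)$ (and, for the second assertion, that it strictly increases when $f$ is applied with $n\ge 0$); since $\Omega_{X,k}^{ch}(U)$ is by definition the $\mathbb C$-span of the monomials with $\wt\ge k$, this gives the lemma. For orientation I would first note that on the polynomial model $\Omega_N$ the assignment $\deg\gamma^i=1$, $\deg\beta^i=-1$, $\deg b^i=\deg c^i=0$, $\deg\partial=0$ is an honest $\mathbb Z$-grading of the vertex algebra (the only nontrivial OPEs, $\beta^i_{(0)}\gamma^j=\delta^i_j$ and $b^i_{(0)}c^j=\delta^i_j$, having degree $0$), that the basis elements (\ref{eq.bas}) are $\deg$-homogeneous with $\deg=v-s$, and that $\wt$ is what survives of $\deg$ once the non-negative polynomial degree of a function factor $f(\gamma)$ is discarded; discarding that degree is exactly what degenerates the grading into a decreasing filtration on $\Omega_X^{ch}(U)=\Omega_N\otimes_{\mathbb C[\gamma]}\mathcal O(U)$, and explains why products can only raise $\wt$. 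I would also record at once that $\partial$ preserves each $\Omega_{X,k}^{ch}(U)$: on a monomial of (\ref{eq.span}), $\partial$ either raises a $\partial^a$ on a generator (no change of type, no change of $\wt$) or acts on $f(\gamma)$ through $\partial f=\sum_i(\partial_i f)\,\partial\gamma^i$, manufacturing a new $\partial\gamma$-factor and hence raising $\wt$.

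For the first assertion, by $\mathbb C$-bilinearity it suffices to take $A,A'$ of the shape (\ref{eq.span}); each such element is a right-nested normally ordered product of single generators $\partial^a\beta^i,\partial^ab^i,\partial^ac^i,\partial^a\gamma^i\ (a\ge1)$ and one function $f(\gamma)$. Expanding $A_{(n)}A'$ by the iterated non-commutative Wick (Borcherds) identity exhibits it as a finite sum of nested $(p)$-products of these constituent factors, so the estimate follows from three facts: (a) for constituents $X,Y$ and any $p$, $X_{(p)}Y\in\Omega_{X,\wt X+\wt Y}^{ch}(U)$, a function counting as $\wt=0$; (b) normally ordered products add $\wt$ up to terms of strictly higher filtration; and (c) $\partial$ does not lower $\wt$. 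The one organizational subtlety is that at intermediate stages of the Wick expansion the left argument of a $(p)$-product is no longer a single generator; this is handled by the standard reconstruction/PBW-type simultaneous induction, run on the (finite, as noted above) bigrading by conformal weight and fermionic number, applying (a)--(c) at each step.

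Fact (a) is a short finite check, since among the generators the only singular OPEs are $\beta^i_{(0)}\gamma^j=\delta^i_j$, $\beta^i_{(0)}f=\partial_i f$, $b^i_{(0)}c^j=\delta^i_j$ and their $\partial$-derivatives. The mechanism is transparent and makes the outcome evident: a $\beta$-factor is only ever removed by a contraction, never produced; contracting $\beta^i$ against a factor $\partial^m\gamma^j\ (m\ge1)$ removes both and leaves $v-s$ fixed; contracting $\beta^i$ against a $\gamma$ inside $f$ turns $f$ into the function $\partial_i f$, so a $\beta$ disappears and $v-s$ increases; and $\partial$ only ever creates $\partial\gamma$-factors. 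Hence $v-s$ never goes down, and it goes up whenever a $\beta$ is contracted. This last remark also disposes of the second assertion directly: a function $f$ has singular OPE only with $\beta$-factors, so for $n\ge0$ the product $f_{(n)}A$ is a finite sum of terms each obtained by contracting $f$ with a single $\beta$-factor $\partial^a\beta^i$ of $A$ and replacing it by a derivative $\partial^c(\partial_i f)$ of a function (of $\wt\ge0$); every such term has $\wt\ge\wt A+1$, whence $f_{(n)}A\in\Omega_{X,k+1}^{ch}(U)$.

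I expect the real work to be the bookkeeping of the second paragraph rather than any single estimate: one must ensure that the non-canonical normally ordered monomials produced at intermediate steps of the Wick expansion still obey the additivity of $\wt$ over their factors (modulo higher filtration), which is itself an instance of (a)--(c), so the induction has to be organized — on the finite conformal-weight/fermionic-number bigrading — so as to close without circularity.
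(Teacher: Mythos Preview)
Your proposal is correct and follows essentially the same approach as the paper: both arguments track the quantity $v-s$ through the Wick-type expansion of $A_{(n)}A'$, observing that contractions of $\beta$ with $\partial^m\gamma$ preserve it, contractions of $\beta$ with a function raise it, and $\partial$ never lowers it. The paper's proof is terser, simply asserting that $A_{(n)}A'$ is obtained from the constituents of $A,A'$ by contractions and derivatives, whereas you supply the formal scaffolding (the auxiliary $\mathbb Z$-grading on $\Omega_N$, the Borcherds identity, and the induction on the finite bigrading) that justifies this description.
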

 \begin{proof}We can assume $A$ and $A'$ are elements in the form of (\ref{eq.span}). $A_{(n)}A'$ is obtained  from $A$ and $A'$ by contracting some $\beta's$ with $\gamma's$ and some $b's$ with $c's$
  and acting  $\partial's$ on some  $\beta's$ , $\gamma's$ ,$b's$ and $c's$ in $A$
 and $A'$. One $\partial^s \gamma$, $s>0$  contracts with one $\beta$ and
 one $\beta $ contracts with one $\partial^s \gamma$, $s>0$ or a holomorphic function. Thus through the contraction, the number of $\partial^s\gamma's $,
  $s>0$ minus the number of $\beta's$ in $A$ and $A'$ will not decrease. The number of $\beta's$ will not change and the number of $\partial^s\gamma's$, $s>0$
 will not decrease by acting  $\partial's$ on  $\beta's$ , $\gamma's$, $b's$ and $c's$. So  $A_{(n)}A'\in \Omega_{X,k+l}^{ch}(U).$  If $A=f$ and $n\geq 0$, if $f_{(n)}A\neq 0$,  there is some $\beta$ contracts with $f$, so  $f_{(n)}A\in \Omega_{X,k+1}^{ch}(U)$. If $n<-1$, $f_{(n)}A=(\frac 1 {(-n-1)!}\partial^{-n-1}f)_{(-1)}A\in  \Omega_{X,k+1}^{ch}(U)$.
 \end{proof}
 Let $$ gr(\Omega_{X}^{ch})(U)=\bigoplus_k gr_k(\Omega_{X}^{ch})(U),\quad gr_k(\Omega_{X}^{ch})(U)=\Omega_{X,k}^{ch}(U)/\Omega_{X,k+1}^{ch}(U).$$
 Let
 $$p_k :\Omega_{X,k}^{ch}(U) \to  gr_k(\Omega_{X}^{ch})(U)$$
  be the projection.
  By Lemma \ref{lem:prod}, the $n$-th product
$$\__{(n)}\_: gr_k(\Omega_{X}^{ch})(U)\times gr_l(\Omega_{X}^{ch})(U)\to gr_{k+l}(\Omega_{X}^{ch})(U)$$
 given by
 $$p_k(A)_{(n)}p_l(A')=p_{k+l}(A_{(n)}A').$$
is well defined on $gr(\Omega_{X}^{ch})(U)$.  So $gr(\Omega_{X}^{ch})(U)$ is a vertex algebra with the $n$-th product given by the above equation. By Lemma \ref{lem:prod}, for any $f\in \mathcal O(U)$, any $A\in gr(\Omega_{X}^{ch})(U)$ and $n\neq -1$,  $f_{(n)} A=0$. So $gr(\Omega_{X}^{ch})(U)$ is
 a $\mathcal O(U)$ module under the $-1$-th product.

 Let $\mathbf B^i=p_{-1}(\beta^i)$, $\mathbf A^i=p_{1}(\partial\gamma^i)$, $\mathbf b^i=p_{0}(b^i)$, $\mathbf c^i=p_{0}(c^i)$.  We have
  \begin{equation}\label{eq:B0}\mathbf B_{(0)}=0\end{equation}
  on $gr(\Omega_{X}^{ch})(U)$ since $\beta_{0}$ maps $ \Omega_{X,k}^{ch}(U)$ to $\Omega_{X,k}^{ch}(U)$. The nontrivial OPEs of these elements are
 $$\mathbf B^i(z) \mathbf A^j(w)\sim \frac {\delta^i_j}{(z-w)^2},\quad \mathbf b^i(z) \mathbf c^j(w)\sim \frac {\delta^i_j}{z-w} .$$
 Let $\mathbf W_\gamma$ be the vertex algebra generated by these elements, then
 $gr(\Omega_{X}^{ch})(U)=\mathcal O(U) \otimes_{\mathbb C}\mathbf W_\gamma$. It is easy to see that as a vertex algebra $\mathbf W_\gamma$ is isomorphic to $\Omega_\gamma^+$ by identifying $\mathbf B^i,\mathbf A^i,\mathbf b^i$ and $\mathbf c^i$ with $\beta^i,\partial \gamma^i,b^i$ and $c^i$, respectively.

From Lemma \ref{lem:prod} and the coordinate change equations (\ref{chi.coo}), we have
\begin{lemma}\label{lem:prefilt}The filtration $\{\Omega_{X,k}^{ch}(U)\}$ is preserved under the coordination change.
\end{lemma}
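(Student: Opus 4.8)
The plan is to give a coordinate-free description of the subspaces $\Omega_{X,k}^{ch}(U)$ and then read the claim off the transfer equations (\ref{chi.coo}). Assign weights to the generators of $\Omega_X^{ch}(U)$ by $\wt(\beta^i)=-1$, $\wt(b^i)=\wt(c^i)=0$, $\wt(\partial\gamma^i)=1$, and $\wt(f)=0$ for $f\in\mathcal O(U)$. A spanning monomial (\ref{eq.span}), with $s$ the number of its $\beta$-factors and $v$ the number of its $\partial^{n}\gamma$-factors with $n>0$, is then an iterated normally ordered product of $\partial$-derivatives of $\beta^i,b^i,c^i,\partial\gamma^i$ with a single $\mathcal O(U)$-coefficient, and the sum of the weights of those generator factors equals $v-s$. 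First I would record three stability facts: (a) $\partial$ preserves each $\Omega_{X,k}^{ch}(U)$, since applying $\partial$ to a monomial (\ref{eq.span}) via the Leibniz rule leaves $s$ and $v$ unchanged, except when $\partial$ falls on the holomorphic factor, which leaves $s$ fixed and increases $v$; (b) normally ordered products add filtration degrees, as $:AA':=A_{(-1)}A'$ is the $n=-1$ case of Lemma \ref{lem:prod}, and iterating this (in any parenthesization) an iterated normally ordered product of generators and their derivatives lies in the filtration piece indexed by the sum of their weights; (c) $\mathcal O(U)\subset\Omega_{X,0}^{ch}(U)$, so including a holomorphic coefficient in a normally ordered product does not change the filtration degree. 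Together these give the intrinsic description: $\Omega_{X,k}^{ch}(U)$ is the span of all iterated normally ordered products, with $\mathcal O(U)$-coefficients, of $\partial$-derivatives of $\beta^i,b^i,c^i,\partial\gamma^i$ in which the weights of those generators sum to at least $k$.

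Next I would apply this in the new coordinates. By (\ref{chi.coo}), $\partial\tilde\gamma^i=\ :\tfrac{\partial f^i}{\partial\gamma^j}\,\partial\gamma^j:$ has one $\partial\gamma$-factor and a holomorphic coefficient, so lies in $\Omega_{X,1}^{ch}(U)$; likewise $\tilde b^i$ and $\tilde c^i$ (one $b$, resp. one $c$, with holomorphic coefficient) lie in $\Omega_{X,0}^{ch}(U)$; and $\tilde\beta^i$ is the sum of $:\tfrac{\partial g^j}{\partial\tilde\gamma^i}(g(\gamma))\,\beta^j:\ \in\Omega_{X,-1}^{ch}(U)$ and a $:c\,b:$-term which already lies in $\Omega_{X,0}^{ch}(U)\subset\Omega_{X,-1}^{ch}(U)$, so $\tilde\beta^i\in\Omega_{X,-1}^{ch}(U)$. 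Moreover any holomorphic function of $\tilde\gamma$ is $h(f(\gamma))\in\mathcal O(U)$. Hence a spanning monomial (\ref{eq.span}) written in the $\tilde\gamma$-coordinates, with $\tilde v-\tilde s\ge k$, is by (a)--(c) an iterated normally ordered product of the transferred generators and their derivatives whose weights sum to $\tilde v-\tilde s\ge k$; by the intrinsic description it therefore lies in $\Omega_{X,k}^{ch}(U)$ as defined by the $\gamma$-coordinates. This yields the inclusion $\Omega_{X,k}^{ch}(U)_{\tilde\gamma}\subseteq\Omega_{X,k}^{ch}(U)_{\gamma}$, and the inverse substitution $\gamma=g(\tilde\gamma)$ has transfer equations of the same shape (with $f$ and $g$, $\gamma$ and $\tilde\gamma$ exchanged), so the reverse inclusion holds by the same argument and the two filtrations coincide.

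The one point that needs care is the equivalence between the combinatorial definition of the filtration via (\ref{eq.span}) and the intrinsic description: one must check that the spanning set (\ref{eq.span}) really is closed, with the stated additive weight behaviour, under normally ordered products, under $\partial$, and under multiplication by $\mathcal O(U)$. This is exactly the content of Lemma \ref{lem:prod}, which bounds the filtration degree of every mode $A_{(n)}A'$, including the Wick-contraction terms that appear when an iterated normally ordered product of generators is re-expanded in the basis (\ref{eq.bas}); as in the proof of Lemma \ref{lem:prod}, those contraction terms only raise the filtration degree. Granting that, the rest of the proof is a direct inspection of (\ref{chi.coo}) and its inverse.
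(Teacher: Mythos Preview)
Your proposal is correct and follows essentially the same approach as the paper: the paper's proof is the single sentence ``From Lemma \ref{lem:prod} and the coordinate transfer equations (\ref{chi.coo})'', and your argument is precisely an unpacking of that sentence --- you verify from (\ref{chi.coo}) that each transferred generator $\tilde\beta^i,\tilde b^i,\tilde c^i,\partial\tilde\gamma^i$ lies in the correct filtration piece $\Omega_{X,-1}^{ch},\Omega_{X,0}^{ch},\Omega_{X,0}^{ch},\Omega_{X,1}^{ch}$, and then use Lemma \ref{lem:prod} (together with stability under $\partial$) to conclude that iterated normally ordered products of the transferred generators land in the filtration piece indexed by the sum of their weights. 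The weight formalism and the explicit symmetry argument for the reverse inclusion are helpful elaborations, but not a different method.
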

Thus the vertex algebra sheaf $\Omega^{ch}_{X}$ has a filtration $\{\Omega^{ch}_{X,k}\}$. Its associated graded object
$$gr(\Omega^{ch}_{X})=\bigoplus \Omega^{ch}_{X,k}/\Omega^{ch}_{X,k+1}, \quad \text{with } \Omega^{ch}_{X,k}/\Omega^{ch}_{X,k+1}(U)=gr(\Omega^{ch}_{X,k})(U)$$
is a sheaf  of vertex algebra. Under the coordinate change (\ref{eq:trans}),
\begin{align}\label{chi.coo2}
\tilde {\mathbf A}^i&=:\frac{\partial f^i}{\partial \gamma^j} \mathbf A^j:\,, \nonumber \\
\tilde {\mathbf b}^i&=:\frac{\partial g^j}{\partial \tilde \gamma^i}(g(\gamma))\mathbf b^j: \nonumber\,, \\
\tilde {\mathbf c}^i&=:\frac{\partial f^i}{\partial \gamma^j}\mathbf c^j:\,,\\
\tilde {\mathbf B}^i&=:\frac{\partial g^j}{\partial \tilde \gamma^i}(g(\gamma))\mathbf B^j:\,.\nonumber
\end{align}
So $gr(\Omega^{ch}_{X})$ is the sheaf of the sections of holomorphic vector bundle
$$V_{q,y}=\bigotimes_{n=1}^\infty(S_{q^n}(T)\otimes S_{q^n}( T^*)\otimes \wedge_{y^{-1}q^n}T\otimes \wedge_{yq^{n-1}} T^*)=\sum _{k,p}V_{q,y}[k,p] y^pq^k$$
with  $gr(\Omega^{ch}_{X})[k,p]$, the part of conformal weight $k$ and fermionic number $p$ of $gr(\Omega^{ch}_{X})$, is the sheaf of the holomorphic sections of $V_{q,y}[k,p]$.
$V_{q,y}$ is a holomorphic vector bundle of vertex algebra and its fibre at $x\in U$ is isomorphic to $\mathbf W_\gamma$.

\subsection{Global sections}

If $X$ is a Calabi-Yau manifold,
  there are four global sections $Q(z)$, $L(z)$, $J(z)$ and $G(z)$ on $X$, which generate an $N=2$ super conformal vertex algebra with the central charge $c=3\dim X$.
%B(z), D(z), C(z), E(z)$ of$\Omega_X^{ch}$.
 Locally,
\begin{align}\label{eqn:topvertex}
 &Q(z)= \sum_{i=1}^N:\beta^i(z)c^i(z):,& &L(z)=\sum_{i=1}^N(:\beta^i(z)\partial\gamma^i(z):-:b^i(z)\partial c^i(z):),& \\
&J(z)=-\sum_{i=1}^N:b^i(z)c^i(z):,& &G(z)=\sum_{i=1}^N:b^i(z)\partial\gamma^i(z):.&\nonumber
\end{align}
If $X$ has a nowhere vanishing holomorphic volume form $\omega_0$. Let $(U,\gamma^1,\cdots, \gamma^N)$ be a coordinate system of $X$ such that
$\omega_0=d\gamma^1\cdots d\gamma^N$.
There are two global sections $D(z)$ and $E(z)$  of  $\Omega_X^{ch}$, which can be constructed from $\omega_0$.
Locally,  $D(z)$ and $E(z)$ can be represented by
$$D(z)= : b^1(z)b^2(z)\cdots b^N(z):,\quad E(z)=:c^1(z)c^2(z)\cdots c^N(z):.$$
 Let $B(z)=Q(z)_{(0)}D(z)$,  $C(z)=G(z)_{(0)}E(z)$.  These eight sections $Q$, $J$, $L$, $G$, $E$, $D$, $B$, $C$ generate a vertex algebra $\mathcal V_0$. If $N=3$, it is  Odake's algebra\cite{O}.
 %If $X$ is a K3 surface, these eight sections strongly generate $\Omega_X^{ch}(X)$ as a vertex algebra.
% For a given complex manifold $X$, let
 %$\mathcal U$ be the set of the open sets of $M$ which are isomorphic to open sets of $\mathbb C^n$. The correspondence $U\to \cQ(U)$  for $U\in \mathcal U$, defines a vertex algebra sheaf $\cQ = \cQ_M$ called the \textsl{chiral de Rham complex} on $M$.

Let $$\bar Q=p_{-1}(Q),\quad \bar L=p_{0}(L),\quad  \bar J=p_{0}(J),\quad  \bar G=p_{1}(G),$$
$$\bar D=p_{0}(D),\quad \bar E= p_{0}(E),\quad \bar B= p_{-1}(B),\quad \bar C=p_{1}(C).$$
These eight elements are holomorphic sections of $gr(\Omega^{ch}_{X})$. They generate a vertex algebra which is isomorphic to $\mathcal V_0$ through the isomorphism from $\mathbf W_\gamma$ to $\Omega_\gamma^+$.

If $X$ is a hyperK\"ahler manifold with a holomorphic symplectic form $\omega_1$, let $\omega_1^{-1}$ be the inverse bivector of $\omega_1$. Locally, they are given by
$\omega_1=\omega_{ij}d\gamma^i\wedge d\gamma^j$ and $\omega_1^{-1}=\omega^{ij}\frac{\partial}{\partial\gamma^i}\frac{\partial}{\partial \gamma^j}$.
Let
\begin{eqnarray*} E_1(z)=\sum_{ij}\omega_{ij}c^i(z)c^j(z),& \quad D_1(z)=\sum_{ij}\omega^{ij}b^i(z)b^j(z);\\
B_1(z)=Q_{(0)}D_1(z), & C_1=G(z)_{(0)}E_1(z).
\end{eqnarray*}
$Q, J, L, G, E_1, D_1, B_1, C_1$  are global sections of $\Omega_X^{ch}$ and they generate a copy of the $\mathcal N=4$  vertex algebra $\mathcal V_1$ of central charge
$c= 3\dim X$ \cite{H}. $\mathcal V_0$ is a sub algebra of $\mathcal V_1$.

Let $$\bar D_1=p_{0}(D_1),\quad \bar E_1= p_{0}(E_1),\quad \bar B_1= p_{-1}(B_1),\quad \bar C_1=p_{1}(C_1).$$
$\bar Q, \bar J, \bar L, \bar G, \bar E_1, \bar D_1, \bar B_1, \bar C_1$ are holomorphic sections of $gr(\Omega^{ch}_{X})$.
They generate a copy of the $\mathcal N=4$ vertex algebra which is isomorphic to $\mathcal V_1$ through the isomorphism from $\mathbf W_\gamma$ to $\Omega_\gamma^+$.

\section{Unitary representation}

Let $h=(-,-)_X$ be the Ricci flat K\"ahler metric on a compact Calabi-Yau maniflod $X$ with $H_{ij}=(\frac {\partial}{\partial \gamma^i}, \frac {\partial}{\partial \gamma^j})_X$.
\begin{lemma}\label{lem:Hermitianexist}
There is a unique   Hermitian metric $(-, -)$ on the vector bundle $V_{q,y}$, such that  for any $ a, b\in  gr(\Omega^{ch}_{X})(U)$,
\begin{eqnarray}\label{eqn:propherm}
&(1,1)=1;& \nonumber \\
&(\mathbf B_{(n)}^{i}a,b)=H_{ij}(a,\mathbf A^{j}_{(-n)}b), &\text{for any } n\in\mathbb Z , n\neq 0, ; \\
&(\mathbf b^{i}_{(n)}a,b)=H_{ij}(a, \mathbf c^{j}_{(-n-1)}b), &\text{for any } n\in\mathbb Z.\nonumber
\end{eqnarray}
\end{lemma}
\begin{proof}
For any holomorphic coordinate system $(U,\gamma^1,\cdots \gamma^N)$ of $X$,
let $\sum_jF_{ij}\frac {\partial}{\partial \gamma^j}$ be a smooth orthonormal frame of the tangent bundle on $U$, we have $\sum_{i,j}F_{ki}H_{ij}\bar F_{lj}=\delta_{kl}$.
Let $F^{jk}$ be the inverse matrix of $F_{ij}$, that is $\sum_j F_{ij}F^{jk}=\delta_{ik}$. Let
$$\tilde {\mathbf B}^k=F_{ki}\mathbf B^i,\quad \tilde {\mathbf b}^k=F_{ki}\mathbf b^i,\quad \tilde {\mathbf A}^k=F^{jk}\mathbf A^j,\quad \tilde {\mathbf c}^k=F^{jk}\mathbf c^j.$$
The equations of (\ref{eqn:propherm}) are equivariant to
\begin{eqnarray}\label{eqn:propherm2}
&(1,1)=1;& \nonumber \\
&(\tilde{\mathbf B}_{(n)}^{i}a,b)=\delta_{ij}(a,\tilde{\mathbf A}^{j}_{(-n)}b), &\text{for any } n\in\mathbb Z , n\neq 0, ; \\
&(\tilde {\mathbf b}^{i}_{(n)}a,b)=\delta_{ij}(a, \tilde{\mathbf c}^{j}_{(-n-1)}b), &\text{for any } n\in\mathbb Z.\nonumber
\end{eqnarray}
Let
$$\mathcal S=\{\sqrt{\frac{1}{k_1!k_2!\cdots k_l!}}X_1^{k_1}\cdot X_l^{k_l}1|X_1,\cdots, X_l \text{ are distinct elements of  } \frac 1{\sqrt{-n}}\tilde{\mathbf B}^i_{(n)}, \frac 1{\sqrt{-n}}\tilde{\mathbf A}^i_{(n)}, \tilde{\mathbf b}^i_{(n)}, \tilde{\mathbf c}^i_{(n)}, n<0 \}.$$
Then $\mathcal S$ is a smooth frame of the vector bundle $V_{q,y}$ on $U$. Let $(-, -)$ be the Hermitian metric for $V_{q,y}$ on $U$, such that the $\mathcal S$ is an orthonormal frame on $U$,
then this Hermitian metric satisfies equation (\ref{eqn:propherm2}), so as  equation (\ref{eqn:propherm}).
Since the fibre of $V_{q,y}$ at $x\in U$ is generated by $\mathbf B^i$, $\mathbf A^i$, $\mathbf b^i$, $\mathbf c^i$, by equation (\ref{eqn:propherm}), if the desired Hermitian metric exist, it must be unique.
Equations (\ref{eqn:propherm}) are compatible under coordinate change, so the Hermitian metric we choose does not dependent on the coordinate system we choose. Thus we get the unique Hermitian metric $(-, -)$ on the vector bundle $V_{q,y}$ with the desired property.
\end{proof}

Since $\mathbf B_{(0)}=0$ by (\ref{eq:B0}), through a direct calculation, we have
\begin{lemma}If $(-, -)$ is the Hermitian metric on $V_{q,y}$ given by Lemma \ref{lem:Hermitianexist}, then for any $ a, b\in  gr(\Omega^{ch}_{X})(U)$,
\begin{align}\label{eqn:conjugate1}
&(\bar Q_{(n)}a,b)=(a,\bar G_{(-n+1)}b), \quad \quad \quad\quad\quad\quad (\bar J_{(n)}a,b)=(a,\bar J_{(-n)}b),\nonumber\\
&(\bar L_{(n)}a,b)=(a,(\bar L_{(-n+2)}-(n-1)\bar J_{(-n+1)}b), \nonumber \\
&\text{if } X \text{ has a nonvanishing } N \text{ form},\nonumber\\
 &(\bar D_{(n)}a,b)=(a,(-1)^{\frac {N(N-1)}2 }\bar E_{(N-2-n)}b), \quad (\bar B_{(n)}a,b)=(a,(-1)^{\frac {(N-1)(N-2)}2 }\bar C_{(N-1-n)}b),\\
&\text{if } X \text{ is a hyperK\"ahler manifold},\nonumber\\
&(\bar {D_1}_{(n)}a,b)=(a,\bar {E_1}_{(-n)}b),\quad  \quad\quad\quad\quad\quad(\bar B_{1(n)}a,b)=(a,\bar C_{1(1-n)}b).\nonumber
\end{align}
\end{lemma}
Let $(-, -)_V$ be the Hermitian metric on $V_{q,y}\otimes \wedge\bar T^*X$ induced by the K\"ahler metric on $X$ and the Hermitian metric on $V_{q,y}$ in Lemma \ref{lem:Hermitianexist}.
So we have an Hermitian metric on the cohomology of $gr(\Omega^{ch}_{X})$ given by
$$(a, b)=\int_X(a,b)_V\Phi, .$$
Here $\Phi$ is the volume form of $X$ and  $ a, b\in H^*(X, gr(\Omega^{ch}_{X}))$  are represented by harmonic forms with values in $V_{q,y}$.

For a linear operator $O$ on  $H^*(X, gr(\Omega^{ch}_{X}))$, let $O^*$ be its conjugate operator, the operator satisfies
$$(O a,b)=(a, O^* b), \quad a, b \in H^*(X, gr(\Omega^{ch}_{X})). $$

\begin{lemma}\label{lem:unigrad}
If $X$ is a compact Calabi-Yau manifold, the cohomology $H^i(X, gr(\Omega^{ch}_{X}) )$ of the sheaf  $gr(\Omega^{ch}_{X})$ is a unitary representation of the $N=2$ surper conformal vertex algebra
 with \begin{align}\label{eqn:conjugate2}
&\bar Q_{(n)}^*= \bar G_{(-n+1)},& &\bar J_{(n)}^*= \bar J_{(-n)},&\\
&\bar L_{(n)}^*= \bar L_{(-n+2)}-(n-1)\bar J_{(-n+1)}.\nonumber
\end{align}
If $X$ has a nonvanishing holomorphic volume form, $H^i(X, gr(\Omega^{ch}_{X}) )$ is a unitary representation of $\mathcal V_0$ with equation (\ref{eqn:conjugate2}) and
\begin{equation}\label{eqn:conjugate3}\bar D_{(n)}^*= (-1)^{\frac {N(N-1)}2 }\bar E_{(N-2-n)},\quad \bar B_{(n)}^*= (-1)^{\frac {(N-1)(N-2)}2 }\bar C_{(N-1-n)}.
\end{equation}
If $X$ is a hyperK\"ahler manifold,  $H^i(X, gr(\Omega^{ch}_{X}) )$ is a unitary representation of $\mathcal V_1$ with equation (\ref{eqn:conjugate2}) and
\begin{equation} \label{eqn:conjugate4}\bar {D_1}^*_{(n)}=\bar {E_1}_{(-n)},\quad \bar {B_1}^*_{(n)}=\bar {C_1}_{(1-n)}.
\end{equation}
\end{lemma}
\begin{proof}
If $X$ is a HyperK\"ahler manifold, to show $H^i(X, gr(\Omega^{ch}_{X}) )$ is a unitary representation of $\mathcal V_1$,
 we only need to show equation (\ref{eqn:conjugate2}) and (\ref{eqn:conjugate4}) since $\mathcal V_1$ is generated by $\bar Q, \bar J, \bar L, \bar G, \bar E_1, \bar D_1, \bar B_1, \bar C_1$.
It is easy to see that the $n$-th production of these eight elements with  any harmonic form with value in $V_{q,y}$ is still harmonic. Equations (\ref{eqn:conjugate1}) are still satisfied
for the Hermitian metric on $H^*(X, gr(\Omega^{ch}_{X}))$ since they are satisfied for the Hermitian metric on $V_{q,y}$.  So we have equation (\ref{eqn:conjugate2}) and (\ref{eqn:conjugate4}).

The proof for other cases is similar.
\end{proof}

\subsection{A filtration of  chiral Hodge cohomology}

Let $$\tau_k: \Omega^{ch}_{X,k} \to \Omega^{ch}_{X}$$ be the imbedding, which induces the morphism of their cohomology
$$\tau_{k*}: H^i(X, \Omega^{ch}_{X,k}) \to  H^i(X,\Omega^{ch}_{X}).$$
Let $ H^i_k(X)$ be the image of $\tau_{k*}$. $\{ H^i_k(X)\}$ is a filtration of $H^i(X,\Omega^{ch}_{X})$. Let  $\mathcal H^i(X,\Omega^{ch}_{X})=\bigoplus_k H^i_k(X)/H^i_{k+1}(X)$ be its associated graded object.
\begin{thm}\label{thm:unitary} If $X$ is a compact Calabi-Yau manifold, $H^i(X,\Omega^{ch}_{X})$ has a filtration $\{H^i_k(X)\}$, such that
$\mathcal H^i(X,\Omega^{ch}_{X})$ is a unitary representation of the $N=2$  vertex algebra with central charge $3\dim X$; if $X$ has a nonvanishing holomorphic volume form,
$\mathcal H^i(X,\Omega^{ch}_{X})$ is a unitary representation of  $\mathcal V_0$;  if $X$ is a hyperK\"ahler manifold,
$\mathcal H^i(X,\Omega^{ch}_{X})$ is a unitary representation of  the $\mathcal N=4$  vertex algebra $\mathcal V_1$ with central charge $c=3\dim X$.
\end{thm}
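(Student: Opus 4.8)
The three assertions follow from a single argument. Write $\mathcal V$ for the relevant vertex algebra (the $N=2$ superconformal algebra of central charge $3\dim X$, or $\mathcal V_0$, or $\mathcal V_1$) and $\bar{\mathcal V}$ for its isomorphic copy inside the holomorphic sections of $gr(\Omega^{ch}_{X})$, generated by $\bar Q,\bar J,\bar L,\bar G$ (together with $\bar D,\bar E,\bar B,\bar C$, resp. $\bar D_1,\bar E_1,\bar B_1,\bar C_1$, in the last two cases). The plan is: (i) equip $\mathcal H^i(X,\Omega^{ch}_{X})$ with a $\bar{\mathcal V}$-module structure coming from the filtration; (ii) exhibit $\mathcal H^i(X,\Omega^{ch}_{X})$ as a subquotient of $H^i(X,gr(\Omega^{ch}_{X}))$ by $\bar{\mathcal V}$-submodules; (iii) conclude via Lemma~\ref{lem:unigrad} together with the elementary fact that such a subquotient of a unitary module is again unitary.

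For (i): the global sections $Q,J,L,G$ of $\Omega^{ch}_{X}$ lie respectively in $\Omega^{ch}_{X,-1},\Omega^{ch}_{X,0},\Omega^{ch}_{X,0},\Omega^{ch}_{X,1}$, with associated-graded images $\bar Q,\bar J,\bar L,\bar G$; likewise $D,E,B,C$ (resp. $D_1,E_1,B_1,C_1$) lie in $\Omega^{ch}_{X,0},\Omega^{ch}_{X,0},\Omega^{ch}_{X,-1},\Omega^{ch}_{X,1}$, with images $\bar D,\bar E,\bar B,\bar C$ (resp. $\bar D_1,\bar E_1,\bar B_1,\bar C_1$). By Lemma~\ref{lem:prod}, the $(n)$-th mode of a section of $\Omega^{ch}_{X,j}$ carries $\Omega^{ch}_{X,k}$ into $\Omega^{ch}_{X,k+j}$; being a $\mathbb C$-linear sheaf operator it descends to sheaf cohomology and, intertwining the maps $\tau_{k*}$, carries $H^i_k(X)$ into $H^i_{k+j}(X)$, hence induces on $\mathcal H^i(X,\Omega^{ch}_{X})=\bigoplus_k H^i_k(X)/H^i_{k+1}(X)$ an operator of filtration degree $j$. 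Since the circle product on $gr(\Omega^{ch}_{X})$ is $p_k(A)_{(n)}p_l(A')=p_{k+l}(A_{(n)}A')$, these induced operators satisfy exactly the operator product expansions of $\bar{\mathcal V}$ and respect the conformal weight and fermion number gradings; hence $\mathcal H^i(X,\Omega^{ch}_{X})$ is a module over $\bar{\mathcal V}\cong\mathcal V$.

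For (ii): on each bidegree piece $\Omega^{ch}_{X}[k,p]$, which is the sheaf of sections of a finite-rank holomorphic bundle, the filtration $\{\Omega^{ch}_{X,j}\}$ is finite, so the spectral sequence of the filtered sheaf $\Omega^{ch}_{X}$ converges, with $E_1=H^\bullet(X,gr(\Omega^{ch}_{X}))$ and $E_\infty=\mathcal H^\bullet(X,\Omega^{ch}_{X})$ (the latter because the induced filtration on the abutment is exactly $\{H^i_k(X)\}$, by the definition $H^i_k(X)=\im\tau_{k*}$). Concretely, the long exact sequences of $0\to\Omega^{ch}_{X,k+1}\to\Omega^{ch}_{X,k}\to gr(\Omega^{ch}_{X,k})\to 0$ realize $H^i_k(X)/H^i_{k+1}(X)$ as a quotient of $\im\bigl(H^i(X,\Omega^{ch}_{X,k})\to H^i(X,gr(\Omega^{ch}_{X,k}))\bigr)$. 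Since the modes of $Q,J,L,G,\dots$ commute with the differential computing sheaf cohomology, commute with the projections onto the associated graded, and --- by naturality of the connecting maps --- intertwine the boundary homomorphisms, every submodule and quotient above is $\bar{\mathcal V}$-stable and every identification is $\bar{\mathcal V}$-linear; thus $\mathcal H^i(X,\Omega^{ch}_{X})$ is, bidegree by bidegree, a subquotient of $H^i(X,gr(\Omega^{ch}_{X}))$ by $\bar{\mathcal V}$-submodules.

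For (iii): by Lemma~\ref{lem:unigrad}, $H^i(X,gr(\Omega^{ch}_{X}))$ carries a positive-definite invariant Hermitian form, block diagonal with finite-dimensional blocks for the conformal weight and fermion number gradings, satisfying the adjoint relations \eqref{eqn:conjugate2} (and the corresponding relations for $\bar D,\bar E$, resp. $\bar D_1,\bar E_1$). For $\bar{\mathcal V}$-submodules $N'\subseteq N\subseteq H^i(X,gr(\Omega^{ch}_{X}))$, invariance makes $(N')^{\perp}$ a submodule, positive-definiteness and finiteness of the blocks give a $\bar{\mathcal V}$-module decomposition $N=N'\oplus(N\cap(N')^{\perp})$, and on $N/N'\cong N\cap(N')^{\perp}$ the restricted form is positive-definite and satisfies the same adjoint relations. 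Applying this to the presentation from step (ii) yields the theorem. The step I expect to be the main obstacle is (ii): one must check that the filtered sheaf operators $X_{(n)}$, having nonzero filtration degree, act coherently on the whole spectral sequence --- equivalently, that they intertwine every connecting homomorphism --- so that the identifications $E_1\cong H^i(X,gr(\Omega^{ch}_{X}))$ and $E_\infty\cong\mathcal H^i(X,\Omega^{ch}_{X})$ are isomorphisms of $\bar{\mathcal V}$-modules respecting the bigrading. Steps (i) and (iii) are then formal, resting only on Lemma~\ref{lem:prod}, the definition of the circle product on $gr(\Omega^{ch}_{X})$, and the finite-dimensionality of the graded pieces.
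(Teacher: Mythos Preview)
Your proposal is correct and follows essentially the same route as the paper: realize $\mathcal H^i(X,\Omega^{ch}_{X})$ as a $\mathcal V$-subquotient of the unitary module $H^i(X,gr(\Omega^{ch}_{X}))$ via the long exact sequences attached to the filtration, then invoke Lemma~\ref{lem:unigrad}. The paper disposes of the ``main obstacle'' you flag in step~(ii) by noting that $\bigoplus_k\Omega^{ch}_{X,k}$ is itself a $\mathcal V_1$-module (with $Q_{(n)},{B_1}_{(n)}$ sending the $k$-summand to the $(k-1)$-summand, etc.), so that $0\to\bigoplus_k\Omega^{ch}_{X,k+1}\to\bigoplus_k\Omega^{ch}_{X,k}\to gr(\Omega^{ch}_{X})\to 0$ and its long exact cohomology sequence are automatically sequences of $\mathcal V_1$-modules, making any separate verification that the modes intertwine the connecting homomorphisms unnecessary.
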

\begin{proof}
 Assume $X$ is a hyperK\"ahler manifold.
 $\bigoplus_k \Omega^{ch}_{X,k}$  has   a $\mathcal V_1$ module structure given by
\begin{align}
 Q_{(n)}a,\, {B_1}_{(n)}a  &&\in \Omega^{ch}_{X,k-1}(U) &;\nonumber \\
 L_{(n)}a,\, J_{(n)}a,\, {D_1}_{(n)}a,\, {E_1}_{(n)}a  & &\in\Omega^{ch}_{X,k}(U);& \hspace{1cm}\text {for any } a\in \Omega^{ch}_{X,k}(U)\nonumber \\
 G_{(n)}a,\, {C_1}_{(n)}a  &&\in \Omega^{ch}_{X,k+1}(U)&.\nonumber
 \end{align}
The exact sequences of sheaves
$$0\to \Omega^{ch}_{X,k+1} \xrightarrow{\iota_{k}} \Omega^{ch}_{X,k}\to \Omega^{ch}_{X,k}/\Omega^{ch}_{X,k+1}\to 0$$
gives an exact sequence of $\mathcal V_1$ modules
$$0\to \bigoplus_k\Omega^{ch}_{X,k+1} \to \bigoplus_k \Omega^{ch}_{X,k}\to gr(\Omega^{ch}_{X})\to 0,$$
which induces a long exact sequence of $\mathcal V_1$ modules
\begin{equation}\label{eq:exact}\cdots \to \bigoplus_k H^i(X,\Omega^{ch}_{X,k+1}) \to \bigoplus_k H^i(X, \Omega^{ch}_{X,k})\to H^i(X, gr(\Omega^{ch}_{X}))\to\cdots.
\end{equation}
 We have an imbedding of $\mathcal V_1$ modules
$$\bigoplus_k H^i(X, \Omega^{ch}_{X,k})/ \iota_{k*}H^i(X, \Omega^{ch}_{X,k+1})\to H^i(X, gr(\Omega^{ch}_{X})).$$
In particular,  $\bigoplus_k H^i(X, \Omega^{ch}_{X,k})/ \iota_{k*}H^i(X, \Omega^{ch}_{X,k+1})$ is a unitary representation of the vertex algebra $\mathcal V_1$, since  by Lemma \ref {lem:unigrad},
$H^i(X, gr(\Omega^{ch}_{X}))$
is a unitary representation of $\mathcal V_1$.

 $\bigoplus_k \Omega^{ch}_{X}$ has a  $\mathcal V_1$  module structure, which is similar to the $\mathcal V_1$ module structure of $ \bigoplus_k \Omega^{ch}_{X,k}$.
 Under these $\mathcal V_1$ module structure,
$$\bigoplus_k \Omega^{ch}_{X,k} \to \bigoplus_k \Omega^{ch}_{X}$$
is a morphism of $\mathcal V_1$ modules.
 So
$$\bigoplus_kH^i(X, \Omega^{ch}_{X,k})\to \bigoplus_k H^i_k(X)$$
is a morphism of $\mathcal V_1$ module. We get a morphism of $\mathcal V_1$ module
$$\tau_*: \bigoplus_kH^i(X, \Omega^{ch}_{X,k})/\iota_{k*}H^i(X, \Omega^{ch}_{X,k+1})\to \bigoplus_k H^i_k(X)/H^i_{k+1}(X).$$
By the definition of $H^i_k(X)$, $\tau_* $ is surjective.  $\bigoplus_kH^i(X, \Omega^{ch}_{X,k})/\iota_{k*}H^i(X, \Omega^{ch}_{X,k})$ is unitary representation of $\mathcal V_1$,
 so $\bigoplus_k H^i_k(X)/H^i_{k+1}(X)$ is a unitary representation of $\mathcal V_1$.

 The proofs for the other two cases are similar.
\end{proof}
Since the filtration $\Omega^{ch}_{X,k}$ is compatible with the grading of conformal weights and fermionic numbers,
the filtration $H^i_k(X)$ is also compatible with the grading of conformal weights and fermionic numbers. In particular,the filtration on the piece of weight zero is trivial,
we have $$\mathcal H^i(X,\Omega^{ch}_{X}[0,p])=H^i(X,\Omega^{ch}_{X}[0,p]).$$

 For the rest of the paper, we assume $X$ is a hyperK\"ahler manilfold. By Theorem \ref{thm:unitary},
 $\mathcal H^i(X,\Omega^{ch}_{X})$ is a direct sum of irreducible representation of the $\mathcal N=4$  vertex algebra with central charge $c=3\dim X$.

\subsection{Unitary representation of $\mathcal V_1$}
For an irreducible unitary representation $M_{k,h,l}$ of the $\mathcal N=4$ vertex algebra $\mathcal V_1$ with central charge $c=6k$,
the  highest weight vector $v$ of $M_{k,h,l}$ is labeled by conformal weight $h$ and the isospin $l$.
$$L_{(1)}v =(h-\frac k 4)v,\quad J_{(0)}v=(2l+k)v.$$

There exist two types of representations in the $N = 4$ vertex algebra\cite{ET1}\cite{ET2}, massless (BPS) ($h=\frac k 4$, $l=0,\frac 1 2,1,\cdots, \frac k 2
$) and massive (non-BPS) ($h>\frac k 4$, $l=\frac 1 2,1,\cdots, \frac k 2
$) representations.
 The character of a representation $V$ of the $\mathcal N=4$ vertex algebra for Ramond sectors  is  defined by
$$ch_{V}^{R}(z;\tau)=y^{-k}\trace_{V}y^{J_{(0)}}q^{L_{(1)}},$$
and
$$ch_{V}^{\tilde R}(z;\tau)=ch_{V}^{R}(z+\frac 1 2;\tau)=(-y)^{-k}\trace_{V}(-y)^{J_{(0)}}q^{L_{(1)}},$$
where $q=e^{2\pi i \tau}, y=e^{2\pi iz}$.
Let  $ch_{k,h,l}^{\tilde R}(z;\tau)$ be the character of the representation $M_{k,h,l}$.

Let $\mathcal A_{h,l}^i(X)$ be the subspace of $\mathcal H^i(X,\Omega^{ch}_{X})$, which consist primitive vectors $v$ with
$$\bar L_{(1)}v=hv; \quad  \bar L_{(n)}v=0, n>1;$$
$$\bar J_{(0)}v=lv; \quad \bar J_{(n)}v=0, n>0;$$
$$\bar Q_{(n)}v=0, n\geq 0;\quad  \bar G_{(n)}v=0, n>1;$$
$$\bar {E_1}_{(n)}v=0, n\geq -1;\quad  \bar {D_1}_{(n)}v=0, n> 1;$$
$$\bar {B_1}_{(n)}v=0, n>1;\quad  \bar {C_1}_{(n)}v=0, n\geq0.$$
By Theorem \ref{thm:unitary},
\begin{equation}\label{eq:hdecom}\mathcal H^i(X,\Omega^{ch}_{X})
=(\bigoplus_{l=0}^kM_{k,\frac k 4,\frac l 2}\otimes \mathcal A^i_{0,k+l }(X))\bigoplus(\bigoplus_{n=1}^\infty\bigoplus_{l=1}^kM_{k,n
+\frac k 4,\frac l 2}\otimes \mathcal A^i_{n,k+l}(X)).
\end{equation}
Let $A^i_{n,l}=A^i_{n,l}(X)=\dim \mathcal A^i_{n,l}(X)$.
$$ch^{\tilde R}_{\mathcal H^i(X,\Omega^{ch}_{X})}(z;\tau)=\sum_{l=0}^kA^i_{0,l+k}ch_{k,\frac k 4,\frac l 2}^{\tilde R}(z;\tau)+\sum_{n=1}^\infty\sum_{l=1}^kA^i_{n,l+k}ch_{k,n
+\frac k 4,\frac l 2}^{\tilde R}(z;\tau).$$

\subsection{The complex elliptic genus}
For a holomorphic vector bundle $E$  on $X$, its  Euler characteristic is
$\chi(X,E)=\sum_{i=0}^N(-1)^i H^i(X,E)$.
The complex elliptic genus of $X$ is
$$Ell_X(z;\tau)=y^{-\frac{\dim X}{2}}\chi(X, V_{-y,q}).$$

By \cite{BL} or by the long exact sequence (\ref{eq:exact}), it is equal to the graded dimension of the cohomology
of the chiral de Rham algebra of $X$, i,e
%$$Ell_X(z;\tau)=y^{-\frac{\dim X}{2}}\supertrace_{H^*(X,\Omega^{ch}_{X})}y^{J_{(0)}}q^{L_{(1)}}.$$
  $$Ell_X(z;\tau)=y^{-\frac{\dim X}{2}}\sum_{i=0}^n(-1)^i\trace_{H^i(X,\Omega^{ch}_{X})}(-y)^{J_{(0)}}q^{L_{(1)}}.$$
If $g$ is an automorphism of the holomorphic vector bundle $E$, let
$$\chi(g; X,E)=\sum_{i=0}^N(-1)^i \trace_{H^i(X,E)} g.$$
For an automorphism $g$ of $X$, by the long exact sequence (\ref{eq:exact}), the equivariant elliptic genus of $X$  is
$$Ell_{X,g}(z;\tau)=y^{-\frac{\dim X}{2}}\chi(g;X, V_{-y,q}).$$
It is equal to
\begin{equation}\label{eq:eqell}Ell_{X,g}(z;\tau)=y^{-\frac{\dim X}{2}}\sum_{i=0}^N(-1)^i\trace_{H^i(X,\Omega^{ch}_{X})}g(-y)^{J_{(0)}}q^{L_{(1)}}.
\end{equation}
A complex automorphism $g$ of the hyperK\"ahler manifold $X$ is called symplectic if it preserve the holomorphic symplectic 2-form $\omega_1$, i.e. $g^*\omega_1=\omega_1$. If $g$ is symplectic automorphism,
from the definition of the eight generators of $\mathcal V_1$, the elements of $\mathcal V_1$ are $g$ invariant. So $g$ acts on $\mathcal A^i_{n,l+k}(X)$.
We have
\begin{thm}\label{thm:char} If $X$ is a hyperK\"ahler manifold, $g$ is a symplectic automorphism of $X$,
\begin{eqnarray}\label{eqn:equiell}Ell_{X,g}(z;\tau)=(-1)^{k}\sum_{i=0}^N(-1)^i(\sum_{l=0}^k ch_{k,\frac k 4,\frac l 2}^{\tilde R}(z;\tau) \trace_{\mathcal A^i_{0,l+k}(X)}g\\
+\sum_{n=1}^\infty\sum_{l=1}^k ch_{k,n+\frac k 4,\frac l 2}^{\tilde R}(z;\tau) \trace_{\mathcal A^i_{n,l+k}(X)}g).\nonumber
\end{eqnarray}
In particular
$$Ell_{X}(z;\tau)
=(-1)^{k}\sum_{i=0}^N(-1)^i(\sum_{l=0}^k A^i_{0,l+k} ch_{k,\frac k 4,\frac l 2}^{\tilde R}(z;\tau)
+\sum_{n=1}^\infty\sum_{l=1}^k A^i_{n,l+k} ch_{k,n+\frac k 4,\frac l 2}^{\tilde R}(z;\tau) ).$$
\end{thm}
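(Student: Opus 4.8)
The plan is to read both sides off the $\mathcal V_1$-module decomposition (\ref{eq:hdecom}) of $\mathcal H^i(X,\Omega^{ch}_X)$, after first rewriting the supertrace formula (\ref{eq:eqell}) for the equivariant elliptic genus in terms of the associated graded object $\mathcal H^i(X,\Omega^{ch}_X)$ rather than $H^i(X,\Omega^{ch}_X)$ itself. There are three things to do: (i) show the supertrace in (\ref{eq:eqell}) is unchanged when computed on $\mathcal H^i$; (ii) substitute (\ref{eq:hdecom}) and use that a symplectic $g$ acts trivially on each $N=4$ irreducible $M_{k,h,l/2}$; (iii) match the character normalization and the powers of $y$ to produce the prefactor $(-1)^{k}$.

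\textbf{Step (i).} A symplectic automorphism is in particular a biholomorphism, so $g$ induces an automorphism of the sheaf $\Omega^{ch}_X$; by Lemma \ref{lem:prefilt} the subsheaves $\Omega^{ch}_{X,k}$ are intrinsically defined, hence $g$-stable, and the sheaf endomorphisms $L_{(1)}$, $J_{(0)}$ preserve the filtration by the module-structure bounds recorded in the proof of Theorem \ref{thm:unitary}. Consequently $g$, $L_{(1)}$ and $J_{(0)}$ all preserve the filtration $\{H^i_k(X)\}$ on $H^i(X,\Omega^{ch}_X)$. In each fixed bidegree $[k,p]$ the filtration on $\Omega^{ch}_X$ is finite and $H^i(X,\Omega^{ch}_X[k,p])$ is finite-dimensional (it carries a finite filtration whose graded pieces are cohomology groups of the finite-rank holomorphic bundle $V[k,p]$ on the compact $X$). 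A filtration-preserving operator on a finite-dimensional filtered space has the same trace as its induced operator on the associated graded, so applying this bidegree by bidegree gives
$$Ell_{X,g}(z;\tau)=y^{-\dim X/2}\sum_{i=0}^N(-1)^i\trace_{\mathcal H^i(X,\Omega^{ch}_X)}g\,(-y)^{\bar J_{(0)}}q^{\bar L_{(1)}}.$$

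\textbf{Steps (ii)--(iii).} Since $g$ is symplectic the eight generators of $\mathcal V_1$ are $g$-invariant, so $g$ commutes with the $\mathcal V_1$-action on $\mathcal H^i(X,\Omega^{ch}_X)$; applying $g$ to (\ref{eq:hdecom}) and using irreducibility of the $M_{k,h,l/2}$, $g$ preserves every isotypic summand and acts there as $\mathrm{id}_{M_{k,h,l/2}}\otimes g$, with $g$ acting on the multiplicity space $\mathcal A^i_{n,l+k}(X)$. Hence the supertrace factorizes over the summands of (\ref{eq:hdecom}), and from the definition $ch^{\tilde R}_{k,h,l/2}(z;\tau)=(-y)^{-k}\trace_{M_{k,h,l/2}}(-y)^{J_{(0)}}q^{L_{(1)}}$ each summand contributes $(-y)^{k}$ times its $N=4$ character times the corresponding $\trace_{\mathcal A^i_{n,l+k}(X)}g$. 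Finally, central charge $c=6k$ forces $\dim X=2k$, so the prefactor is $y^{-\dim X/2}=y^{-k}$ and $y^{-k}(-y)^{k}=(-1)^{k}$; collecting terms gives (\ref{eqn:equiell}), and setting $g=e$ (so that $\trace_{\mathcal A^i_{n,l+k}(X)}e=A^i_{n,l+k}$) gives the displayed formula for $Ell_X$.

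The only step requiring genuine care is Step (i): one must be sure the induced filtration on cohomology is finite and $g$-stable in every bidegree, so that passing to $\mathcal H^i(X,\Omega^{ch}_X)$ really leaves the supertrace invariant. Once that is in place, the theorem is just the bookkeeping of inserting (\ref{eq:hdecom}) and tracking signs and powers of $y$.
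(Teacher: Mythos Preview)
Your argument is correct and follows the same route as the paper's proof: pass from $H^i$ to the associated graded $\mathcal H^i$ using finiteness of the filtration in each bidegree, then plug in the decomposition (\ref{eq:hdecom}) together with $g$-invariance of the $\mathcal V_1$-generators. You supply more detail than the paper does---in particular the justification that the filtration is $g$-stable and finite in each bidegree, and the explicit bookkeeping $y^{-k}(-y)^{k}=(-1)^{k}$ producing the sign---but the strategy is identical.
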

\begin{proof}
$H^i_k$ is a filtration of $H^i(X,\Omega^{ch}_X)$ and it is compatible with the grading of the conformal weight and fermionic number, so
$$\trace_{\mathcal H^i(X,\Omega^{ch}_{X})}g(-y)^{\bar J_{(0)}}q^{\bar L_{(1)}}=\trace_{H^i(X,\Omega^{ch}_{X})}g(-y)^{J_{(0)}}q^{L_{(1)}}.$$
By equation (\ref{eq:eqell}),
$$Ell_{X,g}(z;\tau)=y^{-\frac{\dim X}{2}}\sum_{i=0}^n(-1)^i\trace_{\mathcal H^i(X,\Omega^{ch}_{X})}g(-y)^{\bar J_{(0)}}q^{\bar L_{(1)}}.$$
Thus by equation (\ref{eq:hdecom}) and the fact that elements of $\mathcal V_1$ is $g$ invariant,
we get equation (\ref{eqn:equiell}).
\end{proof}

\section{Relation to Mathieu Moonshine}
In this section we explain the relation between the Mathieu Moonshine and chiral Hodge cohomologe of the K3 surfaces.

\subsection{K3 surface}  Let $X$ be a K3 surface,  the dimensions of its Dolbeault cohomology groups are
$\dim H^{2,2}(X)=1,$
 $\dim H^{2,1}(X)=0$ and $\dim H^{1,1}(X)=20.$

In \cite{S1}\cite{S2}, we showed that $H^0(X,\Omega_X^{ch})$ is the simple $\mathcal N=4$ vertex algebra $\mathcal V_1$ with central charge $c=6$.
So $H^0(X,\Omega_X^{ch})$ and $\mathcal H^0(X,\Omega_X^{ch})$ are isomorphic to $M_{1,\frac 1 4, \frac 1 2}$.
By chiral Poincar\'e duality\cite{MS2},
 $$H^0(X,\Omega_X^{ch}[k,p])\cong H^2(X,\Omega_X^{ch}[k,2-p]).$$
 So
  $$\bigoplus_p H^0(X,\Omega_X^{ch}[k,p])\cong \bigoplus_p H^2(X,\Omega_X^{ch}[k,2-p]).$$
 Since $H^2(X,\Omega_X^{ch})$ is a module of $\mathcal N=4$ vertex algebra with central charge $c=6$,
 $H^2(X,\Omega_X^{ch})$ and  $\mathcal H^2(X,\Omega_X^{ch})$ are isomorphic to $M_{1,\frac 1 4, \frac 1 2}$.

 Obviously
   $$A_{0,2}^1(X)=\mathcal H^1(X,\Omega_X^{ch}[0,2])=H^1(X,\Omega_X^{ch}[0,2])=H^{2,1}(X)=0,$$
  $$\mathcal A^1_{0,1}(X)=\mathcal H^1(X,\Omega_X^{ch}[0,1])=H^1(X,\Omega_X^{ch}[0,1])= H^{1,1}(X).$$
 By (\ref{eq:hdecom}),
 $$\mathcal H^1(X,\Omega_X^{ch})=M_{1,\frac 1 4,0}\otimes H^{1,1}(X)\bigoplus (\bigoplus_{n\geq 1}M_{1,n+\frac 1 4,\frac 1 2} \otimes \mathcal A^1_{n,2}(X)).$$
 By Theorem \ref{thm:char}, we get equation (\ref{eq:chardecom}), the decomposition of the elliptic genus of K3 surface,
$$2\phi_{0,1}(z;\tau)=Ell_{X}(z;\tau)
=-2ch_{1,\frac 1 4,\frac 1 2}^{\tilde R}(z;\tau) +20ch_{1,\frac 1 4,0}^{\tilde R}(z;\tau)+ 2\sum_{n=1}^\infty A_n ch_{1,n+\frac 1 4,\frac 1 2}^{\tilde R}(z;\tau).  $$
Here $A_n= \frac 1 2 A^1_{n,2}(X)$.

Let
$$\mathcal A_X(q)=\sum_{n=1}^\infty \mathcal A_{n,2}^1(X)q^{n-\frac 1 8}.$$
$\mathcal A_X(q)$ has graded dimension $\Sigma(q)+2q^{-\frac 1 8}$.

Since $\mathcal H^2(X,\Omega_X^{ch})$ is isomorphic to $M_{1,\frac 1 4, \frac 1 2}$,
the space of the primitive vectors of $\mathcal H^2(X,\Omega_X^{ch})$ is $H^2(X,\Omega_X^{ch}[0,2])=H^{2,2}(X)$.
If $g$ is a symplectic automorphism of $X$,  the primitive vectors of $\mathcal H^2(X,\Omega_X^{ch})$  are  $g$ invariant.
So
 \begin{equation}\label{eq:eqellk3decom} Ell_{X,g}(z;\tau)
=-2ch_{1,\frac 1 4,\frac 1 2}^{\tilde R}(z;\tau) +ch_{1,\frac 1 4,0}^{\tilde R}(z;\tau)\trace_{H^{1,1}(X)}g
+ \sum_{n=1}^\infty \trace_{\mathcal A_{n,2}^1(X)}g\, ch_{1,n+\frac 1 4,\frac l 2}^{\tilde R}(z;\tau).
\end{equation}
\subsection{Proof of Theorem \ref{thm:main}}
The McKay-Thompson series for $g$ in $M_{24}$ are
\begin{equation}\label{eq:twistchar}\Sigma_g(q)=q^{-\frac 1 8}\sum_{n=0}^\infty q^n\trace_{K_n}g=\frac{e(g)}{24}\Sigma_e(q)-\frac{f_g}{\eta(q)^3},
\end{equation}
where $\Sigma_e(q)=\Sigma(q)$,
$e(g)$ is the character of the $24$-dimensional permutation representation of $M_{24}$, the
series $f_g$ is a certain explicit modular form of weight $2$ for some subgroup $\Gamma^0(N_g)$
of $SL(2,\mathbb Z)$ and $\eta$ is the Dedekind eta function.

There is a deep relation between the K3 surfaces and the Mathieu group $M_{24}$. Mukai has classified the finite symplectic automorphism groups of K3 surfaces in \cite{K}\cite{M},
which are all isomorphic to subgroups of the Mathieu group $M_{23}$ of a
particular type. $M_{23}$ is isomorphic to a one-point stabilizer for the
permutation action of $M_{24}$ on $24$ elements.
If $g$ is a symplectic automorphism of a K3 surface, we can regard $g$ as an element of $M_{24}$.
 Thomas Creutzig and Gerald H\"ohn in \cite{CH} showed that
 for a non-trivial finite symplectic automorphism $g$ acting on a K3
surface $X$, the equivariant elliptic genus and the twining character determined by the
McKay-Thompson series of Mathieu moonshine agree, i.e. one has
\begin{equation}\label{eqn:ellagree}Ell_{X,g}(z;\tau)=\frac {e(g)}{12}\phi_{0,1}+f_g\phi_{-2,1}.\end{equation}
If $g$ is a finite symplectic automorphism acting on a K3
surface $X$, by equation equation (\ref{eq:eqellk3decom}), equation (\ref{eq:twistchar}) and (\ref {eqn:ellagree}), we immediately get Theorem \ref{thm:main}.

By Gannon's theorem , $\mathcal A_X(q)$ is a graded $M_{24}$ module. One may ask whether it is possible to construct a concrete action of the Mathieu group $M_{24}$ on $\mathcal A_X(q)$?

\end{document}